\newcommand{\R}{\mathbb{R}}
\begin{document}
\title{A general condition for Monge solutions in the multi-marginal optimal transport problem\footnote{Y.-H.K. is supported in part by 
Natural Sciences and Engineering
Research Council of Canada (NSERC) Discovery Grants 371642-09 as well as Alfred P. Sloan research fellowship.  B.P. is pleased to acknowledge the support of a University of Alberta start-up grant and National Sciences and Engineering Research Council of Canada Discovery Grant number 412779-2012.
Part of this work was done while Y.-H.K was visiting University Paris-Est Cr\'eteil (UPEC) and Korea Advanced Institute of Science and Technology (KAIST) and he thanks for their hospitality. 
}}

\author{Young-Heon Kim\footnote{Department of Mathematics, University of British Columbia, Vancouver BC Canada V6T 1Z2 yhkim@math.ubc.ca} and Brendan Pass\footnote{Department of Mathematical and Statistical Sciences, 632 CAB, University of Alberta, Edmonton, Alberta, Canada, T6G 2G1 pass@ualberta.ca.}}
\maketitle
\begin{abstract}
We develop a general condition on the cost function which is sufficient to imply Monge solution and uniqueness results in the multi-marginal optimal transport problem.  This result unifies and generalizes several results in the rather fragmented literature on multi-marginal problems.  We also provide a systematic way to generate new examples from old ones.

\end{abstract}

\section{Introduction}

In this paper, we establish a general Monge solution and uniqueness result for the multi-marginal Monge-Kantorovich problem, under a natural analogue of the twist condition.  We call this condition \emph{twist on splitting sets}.

Given compactly supported  Borel probability measures $\mu_1,...\mu_m$ on smooth manifolds $M_1, M_2,...,M_m$, respectively, and a continuous cost function $c: M_1 \times M_2 \times,...,\times M_m \rightarrow \mathbb{R}$, the multi-marginal optimal transport problem is to minimize 
\begin{equation}\label{MK}
\int_{M_1 \times ...\times M_m} c(x_1,x_2,...,x_m)d\gamma,
\end{equation}
among probability measures $\gamma$ on $M_1 \times....\times M_m$ which project to the $\mu_i$.  When an optimal measure $\gamma$ is concentrated on the graph $\{(x,T(x))\}$ of a function $T:M_1 \rightarrow M_2 \times....\times M_m$, it is said to induce a \emph{Monge solution}.   When $m=2$, \eqref{MK} reduces to the classical Monge-Kantorovich problem, which remains a very active area with a wide variety of applications (see \cite{V2} for a comprehensive review).  Recently, applications for the $m \geq 3$ case have arisen in such diverse areas as matching in economics \cite{CE}\cite{CMN}, electronic correlations in physics \cite{CFK}\cite{bdpgg}, monotonicity relationships among vector fields \cite{GG}\cite{GhM}\cite{GhMa} and model free pricing of derivatives in finance \cite{ght}\cite{bhlp}\cite{hpt}.

Under reasonable conditions on the cost and marginals, existence of an optimal measure $\gamma$ is not hard to show.  Two natural open questions are: ``when is the optimal measure $\gamma$ unique?" and ``when does the optimal measure induce a Monge solution?"

In the $m=2$ case, the well known twist condition, dictating that the mapping $x_2 \mapsto D_{x_1}c(x_1,x_2)$ is injective for fixed $x_1$, ensures the uniqueness and Monge structure of the optimal $\gamma$ \cite{G}\cite{GM}\cite{lev}\cite{Caf}.  For larger $m$, these questions are still largely open.  Examples of special cost functions for which the optimal measure has this structure are known \cite{GS}\cite{H}\cite{C}\cite{P9}\cite{KP}, as well as several examples for which uniqueness and Monge solutions fail \cite{P}\cite{CN}.  There are also strong differential conditions on the cost which are known to imply Monge solutions and uniqueness \cite{P1}; however, these conditions are not sharp, as some of the positive examples do not satisfy them.  What seems to be missing is an analogue of the twist condition; that is, a general condition implying Monge solution and uniqueness results, which unifies the scattered, previously established results.

In this paper we propose such a condition on the cost, which we call \emph{twist on  $c$-splitting} sets  (or simply, {\em twist on splitting sets}), and show that it is indeed sufficient for Monge solutions and uniqueness.  We require the mapping $(x_2,...,x_m) \mapsto D_{x_1}c(x_1,x_2,...,x_m)$ to be injective along certain subsets, which we call splitting sets  (see Definition \ref{splitset} below); splitting sets, roughly speaking, are multi-marginal analogues of $c$-super differentials of $c$-concave functions.

We also consider a natural extension  
of  the concept of $c$-cyclical monotonicity to multi-marginal problems.   As we show, any splitting set is automatically $c$-cyclically monotone.  The converse, when $m=2$, is a well known theorem of R\"uschendorf \cite{Ruschendorf}; whether the converse holds for $m \geq 3$ remains an interesting open question.  As an immediate corollary, we obtain Monge solution and uniqueness results whenever $c$ is twisted on $c$-cyclically monotone sets, which in practice may be more direct to check for a given cost than twistedness on splitting sets.

An important conceptual contribution of this paper is that it unifies and extends known Monge solution results for multi-marginal problems.  For example, Monge solution and uniqueness results for a class of costs called matching costs (due to their application in economics) was established in \cite{P9}.  These costs 
may not satisfy the differential conditions in \cite{P1}; in turn, there are costs satisfying the differential conditions which are not of matching form.  However,  both the differential conditions and the matching structure imply twist on splitting sets; indeed, we show that the conditions imposed in \cite{P1} are in fact sufficient (but not necessary) differential conditions for twist on splitting sets.  For cost functions of the form in \cite{P9}, we show twist in $c$-monotonicity is  satisfied as long as the $c_i$ are twisted, and therefore other conditions on the derivatives of the $c_i$, required for the argument in \cite{P9}, are not needed here. 
Indeed, we are able to extend, in a systematic way,  this type of example to a more general class, namely, costs defined as infimums of functions of less variables (see Section~\ref{S:infimal}).


Note that we work with semi-concave (not necessarily smooth) cost functions here.  This makes some of the definitions and proofs slightly more complicated and less elegant looking; on the other hand, we require  the semi-concave framework to handle natural examples where the cost is not everywhere smooth (as in Section~\ref{S:infimal}).  

In the next section, we introduce the key conditions we will use in this paper.  In the third section we state and prove our main theorem, while the final two sections are reserved for two key types of examples.  It is in these final sections that we show the results in \cite{P1}\cite{P9}\cite{GS}\cite{H}\cite{KP} fit into our framework.

\section{Preliminaries}
We now formulate the main concepts used in the paper.

\newtheorem{splitset}{Definition}[section]
\begin{splitset}\label{splitset}
A set $S \subseteq M_1 \times M_2 \times...\times M_m$ is a $c$-splitting set if there exists Borel functions $u_i:M_i \rightarrow \mathbb{R}$ such that for all $(x_1,x_2,...,x_m)$

\begin{equation}\label{eq:u i}
\sum_{i=1}^m u_i(x_i) \leq c(x_1,x_2,...,x_m)
\end{equation}
with equality whenever $(x_1,x_2,...,x_m) \in S$.  We will call the $u_i$ $c$-splitting functions  for $S$.
\end{splitset}

\newtheorem{cmonodef}[splitset]{Definition}
\begin{cmonodef}\label{cmonodef}
A set $S \subseteq M_1 \times M_2 \times...\times M_m$ is $c$-cyclically monotone 
  if for any finite subset $\{(x^1_1,x^1_2,...x^1_m),(x^2_1,x^2_2,...x^2_m),....(x^N_1,x^N_2,...x^N_m) \} \subseteq S$ and any $m$ permutations $\sigma_1,\sigma_2,...\sigma_m$ on $N$ letters, we have
\begin{equation*}
\sum_{i=1}^N c(x^i_1,x^i_2,...x^i_m) \leq \sum_{i=1}^N c(x^{\sigma_1(i)}_1,x^{\sigma_2(i)}_2,...x^{\sigma_m(i)}_m).
\end{equation*}
\end{cmonodef}
Note that, by considering the permutations $\sigma_i \circ \sigma_1^{-1}$, we can always take $\sigma_1=Id$ (or $\sigma_j =Id$, for any other fixed $j$) in the above definition.
The following result relates these concepts to optimal measures $\gamma$ in (\ref{MK}).
\newtheorem{cmonsupport}[splitset]{Proposition}
\begin{cmonsupport}\label{cmonsupport}
A probability measure $\gamma$ on $M_1 \times ...\times M_m$ is optimal in (\ref{MK}) for its marginals if and only if its support is a $c$-splitting set.  Any $c$-splitting set is $c$-cyclically monotone.

\end{cmonsupport}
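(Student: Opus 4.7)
My plan is to treat the two assertions separately: the equivalence in the first is essentially multi-marginal Kantorovich duality, while the second is a direct calculation with the splitting identity.

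For the first assertion, the easy direction is ``if'': suppose $\mathrm{supp}(\gamma) \subseteq S$, where $S$ is $c$-splitting with functions $u_1,\ldots,u_m$. Because $\sum_i u_i(x_i) = c(x_1,\ldots,x_m)$ holds $\gamma$-almost everywhere and $\gamma$ has marginals $\mu_i$, integrating yields $\int c\, d\gamma = \sum_{i=1}^m \int u_i\, d\mu_i$. For any competitor $\gamma'$ with the same marginals, the pointwise inequality \eqref{eq:u i} integrates to $\sum_{i=1}^m \int u_i\, d\mu_i \leq \int c\, d\gamma'$, so $\gamma$ is optimal.

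The converse direction rests on the multi-marginal Kantorovich duality theorem (a standard extension of the $m=2$ case; see Kellerer's theorem or the version in Villani's book), which under our standing assumptions (continuous cost, compactly supported marginals) produces bounded, upper semi-continuous functions $u_i$ with $\sum_i u_i(x_i) \leq c$ everywhere and $\sum_i \int u_i\, d\mu_i = \int c\, d\gamma$. The nonnegative function $c - \sum_i u_i$ then has vanishing integral against $\gamma$, hence vanishes on a $\gamma$-full Borel subset. Since $c$ is continuous and $\sum_i u_i$ is upper semi-continuous, the set $\{\sum_i u_i = c\}$ is closed, and therefore contains the topological support of $\gamma$. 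Thus $\mathrm{supp}(\gamma)$ is a $c$-splitting set, with splitting functions $u_i$.

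The second assertion is a direct computation. Let $S$ be $c$-splitting with functions $u_i$; choose any finite collection $\{(x_1^k,\ldots,x_m^k)\}_{k=1}^N \subseteq S$ and permutations $\sigma_1,\ldots,\sigma_m$. Using the splitting identity on $S$, exchanging the order of summation together with the fact that each inner sum in $k$ is invariant under the permutation $\sigma_j$, and finally applying the splitting inequality at each permuted tuple, we obtain
\begin{equation*}
\sum_{k=1}^N c(x_1^k,\ldots,x_m^k) = \sum_{k=1}^N \sum_{j=1}^m u_j(x_j^k) = \sum_{j=1}^m \sum_{k=1}^N u_j(x_j^{\sigma_j(k)}) \leq \sum_{k=1}^N c(x_1^{\sigma_1(k)},\ldots,x_m^{\sigma_m(k)}),
\end{equation*}
which is precisely $c$-cyclical monotonicity. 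The only nontrivial ingredient in the entire argument is invoking multi-marginal duality with semi-continuous potentials, so as to pass from $\gamma$-a.e.\ equality to equality on all of $\mathrm{supp}(\gamma)$; but this is well established in our setting and presents no real obstacle.
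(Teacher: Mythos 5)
Your proof is correct and takes essentially the same approach as the paper: the second assertion is verified by the same permutation/exchange-of-summation calculation, and the first is deferred to Kellerer's multi-marginal duality, which the paper cites without the detail you supply. The extra care you take in passing from $\gamma$-a.e.\ equality to equality on $\mathrm{supp}(\gamma)$ via semi-continuity of the potentials is a genuine detail the paper glosses over, and it is handled correctly.
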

\begin{proof}
The equivalence of the optimality of $\gamma$ and the splitting set property of its support follows easily from a classical duality theorem of Kellerer \cite{K}.  
We now prove that any $c$-splitting set $S$ is $c$-cyclically monotone.
Let $(u_1,u_2,...,u_m)$ be $c$-splitting functions for $S$.  Then, for any 
\begin{equation*}
\{(x^1_1,x^1_2,...,x^1_m),(x^2_1,x^2_2,...,x^2_m),....(x^N_1,x^N_2,...,x^N_m) \} \subseteq S
\end{equation*}
it is clear that we have 

\begin{equation}\label{splitequal}
\sum_{i=1}^N c(x^i_1,x^i_2,...x^i_m) = \sum_{i=1}^N\sum_{j=1}^m u_j(x_j^i).
\end{equation}

On the other hand, by the definition of splitting functions, we have, for any permutations $\sigma_2,\sigma_3,...,\sigma_m$ (and setting $\sigma_1 =id$)

\begin{equation}\label{permsplitinequal}
\sum_{i=1}^N\sum_{j=1}^m u_j(x_j^i)= \sum_{i=1}^N\sum_{j=1}^m u_j(x_j^{\sigma_j(i)}) \leq \sum_{i=1}^N c(x^i_1,x^{\sigma_2(i)}_2,...x^{\sigma_m(i)}_m).
\end{equation}
\end{proof}

%

\newtheorem{tss}[splitset]{Definition}
\begin{tss}\label{D:twist}
Let $c$ be a continuous, semi-concave cost function.  We say $c$ is {\em twisted on $c$-splitting sets} (respectively, {\em twisted in $c$-cyclical monotonicity}) 
 whenever for each fixed $x_1 \in M_1$ and $c$-splitting set (respectively, $c$-cyclically monotone set) $S \subseteq \{ x_1\} \times M_2 \times \cdots \times M_m$, the map
\begin{equation*}
(x_2,...,x_m) \mapsto D_{x_1}c(x_1,x_2,...,x_m)
\end{equation*} 
is injective on the subset of $S$ where $c$ is differentiable with respect to $x_1$ (i.e, the subset where $D_{x_1}c(x_1,x_2,...,x_m)$ exists).

\end{tss}

%

When $m=2$, any set $S\subseteq \{x_1\} \times M_2$ is trivially  both $c$-cyclically monotone and a $c$-splitting set, so both twist on splitting sets and twist in $c$-cyclical monotonicity reduce to the standard twist condition.  For higher $m$, twist in $c$-cyclical monotonicity clearly implies twist on splitting sets, by Proposition \ref{cmonsupport}.  When $m=3$, a set $S\subseteq \{ x_1\} \times M_2 \times M_3$ is $c$-cyclical monotone  if and only if it is splitting set, by R\"uschendorf's theorem, and so twist in $c$-cyclical monotonicity and twist on splitting sets are equivalent.  We do not know whether this equivalence holds for larger $m$.

\section{Monge solution and uniqueness}

We are now ready to state and prove the main  result.

\newtheorem{main}{Theorem}[section]\label{T:main}
\begin{main}
Assume $c$ is  twisted on splitting sets and the measure $\mu_1$ is absolutely continuous with respect to local coordinates.  Then the solution $\gamma$ in \eqref{MK} induces a Monge solution and is unique.
\end{main}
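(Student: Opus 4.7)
The plan is to extract from an optimal plan $\gamma$ its splitting potentials, differentiate the first one, and then use twist on splitting sets to collapse each fiber to a single point. By Proposition \ref{cmonsupport}, the support $S := \mathrm{spt}(\gamma)$ is a $c$-splitting set with associated functions $u_1, \ldots, u_m$. I would first replace $u_1$ with its $c$-transform
\begin{equation*}
u_1(x_1) := \inf_{(x_2,\ldots,x_m)}\Big[ c(x_1,x_2,\ldots,x_m) - \sum_{i=2}^m u_i(x_i) \Big],
\end{equation*}
which preserves the splitting property of $S$. Since $c$ is semi-concave, and hence uniformly locally semi-concave in $x_1$ on the relevant compacts, $u_1$ is locally semi-concave as an infimum of uniformly semi-concave functions; therefore it is differentiable Lebesgue-a.e.\ in local coordinates. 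The absolute continuity hypothesis on $\mu_1$ then gives differentiability of $u_1$ at $\mu_1$-a.e.\ $x_1$.

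Next, I would fix such a point $x_1$ in the projection of $S$ and examine the slice $S_{x_1} := \{(x_2,\ldots,x_m) : (x_1,x_2,\ldots,x_m) \in S\}$, which inherits the splitting property. For any $(x_2,\ldots,x_m) \in S_{x_1}$, equality on $S$ combined with the splitting inequality at other values of $y_1$ yields
\begin{equation*}
c(y_1,x_2,\ldots,x_m) - c(x_1,x_2,\ldots,x_m) \geq u_1(y_1) - u_1(x_1) \qquad \forall\, y_1 \in M_1.
\end{equation*}
Differentiability of $u_1$ at $x_1$ then shows that $Du_1(x_1)$ is a Fr\'echet subgradient of $y_1 \mapsto c(y_1,x_2,\ldots,x_m)$ at $x_1$. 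Because this function is semi-concave, any Fr\'echet subgradient must coincide with an element of the (automatic, non-empty) superdifferential, forcing differentiability and giving $D_{x_1} c(x_1,x_2,\ldots,x_m) = Du_1(x_1)$ at every $(x_2,\ldots,x_m) \in S_{x_1}$.

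The hypothesis of twist on $c$-splitting sets now applies to the splitting set $\{x_1\} \times S_{x_1}$: the map $(x_2,\ldots,x_m) \mapsto D_{x_1} c(x_1,x_2,\ldots,x_m)$ is injective on the subset of $S_{x_1}$ where $c$ is differentiable in $x_1$. By the previous paragraph that subset is all of $S_{x_1}$, and the map there is constantly equal to $Du_1(x_1)$, so $S_{x_1}$ is a singleton. This produces a $\mu_1$-a.e.\ defined map $T : M_1 \to M_2 \times \cdots \times M_m$ with $\gamma$ concentrated on its graph. For uniqueness, if $\gamma_1, \gamma_2$ are both optimal, then $\tfrac12(\gamma_1 + \gamma_2)$ is also optimal, so by Proposition \ref{cmonsupport} its support---which contains those of $\gamma_1$ and $\gamma_2$---is still a splitting set; the argument above shows it is a graph, forcing $\gamma_1 = \gamma_2$.

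The main obstacle will be the differentiability transfer in the second paragraph: one must use semi-concavity of $c$ in two distinct ways---first to guarantee that the inf-form $u_1$ is locally semi-concave, and second to upgrade a Fr\'echet subgradient of $c(\cdot,x_2,\ldots,x_m)$ at $x_1$ into an honest derivative. Both steps are classical for semi-concave functions but are precisely where the semi-concave (not necessarily smooth) framework of the paper becomes essential; once they are in place, the rest is a fiberwise application of the twist hypothesis together with the standard convex-combination trick for uniqueness.
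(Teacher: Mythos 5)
Your proposal is correct and follows essentially the same route as the paper's own proof: obtain $c$-conjugate splitting potentials via Proposition~\ref{cmonsupport}, use semi-concavity to get $\mu_1$-a.e.\ differentiability of $u_1$, transfer that differentiability to $c(\cdot,x_2,\ldots,x_m)$ on the fiber over $x_1$ (the paper phrases this as $\partial_{x_1}c \subseteq \partial u_1(x_1) = \{Du_1(x_1)\}$, you phrase it as a Fr\'echet subgradient meeting the superdifferential---the same observation), and then invoke twist on splitting sets to collapse each fiber. The uniqueness argument via the convex interpolant is also the paper's.
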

\begin{proof}

We first prove the Monge solution assertion. The key observation is that the twist on splitting sets condition is enough to 
 extend a standard argument from the two marginal case (found in, for example, \cite{GM}) to the multi-marginal case.
By Proposition~\ref{cmonsupport}, there exist splitting functions $(u_1,u_2,...,u_m)$  for $spt(\gamma)$, and it is well known that they can be taken to be \emph{$c$-conjugate} \cite{K}\cite{GS}\cite{P1}; that is, for each $i$,
\begin{equation}\label{dual}
u_i(x_i) = \inf_{x_j, j\neq i}c(x_1,x_2,...,x_m) -\sum_{j \neq i}^mu_i(x_j).
\end{equation}
  In particular, as an infimum of semi-concave functions, $u_1$ is itself semi-concave and therefore differentiable almost everywhere with respect to local coordinates (and hence $\mu_1$ almost everywhere by absolute continuity).

Fix $x_1 \in spt(\mu_1)$ where $u_1$ is differentiable.  We must show that there exists a unique $(x_2,...x_m)$ such that $(x_1,x_2,...x_m) \in spt(\gamma)$: that is, the set 
\begin{equation*}
S= spt(\gamma) \cap \big[\{ x_1\} \times M_2 \times \cdots \times M_m\big]
\end{equation*}
is a singleton.  Non-emptiness of $S$ follows immediately, as the support of $\gamma$ must project to the support of $\mu_1$.  Note that this set is a splitting set at $x_1$, as we have $\sum_{i=1}u_i(x_i) =c(x_1,...,x_m)$ on the support of $\gamma$ {from Proposition~\ref{cmonsupport}.}  By \eqref{dual}, then, for each $(x_1, x_2,...,x_m) \in S$, we must have

\begin{equation*}
\partial_{x_1} c(x_1,...,x_m) \subseteq \partial u_1(x_1) =\{Du_1(x_1)\}
\end{equation*}
where $\partial_{x_1} c(x_1,...,x_m)$ denotes the superdifferential of $c$ with respect to $x_1$; note that the last equality follows by the differentiability of $u_1$ at $x_1$.  It follows that $c$ is differentiable with respect to $x_1$ at $(x_1,...,x_m)$ and we have

\begin{equation*}
Du_1(x_1) = D_{x_1}c(x_1,x_2,...x_m).
\end{equation*}
As $(x_2,...x_m) \mapsto D_{x_1}c(x_1,x_2,...x_m)$ is injective on $S$ by the twist on splitting sets condition, this immediately implies that $S$ must be a singleton.

This shows that every solution to $\gamma$ is concentrated on a graph over the first variable.  Uniqueness follows by a standard argument; as the functional $\eqref{MK}$ is linear, the convex interpolant $\frac{1}{2}(\gamma + \hat \gamma)$ of any two solutions must also be a solution.  However, if $\gamma$ and $\hat \gamma$ are concentrated on graphs $T$ and $\hat T$, respectively, then  $\frac{1}{2}(\gamma + \hat \gamma)$ is concentrated on the union of the graphs of $T$ and $\hat T$; this set itself cannot be a graph unless $T=\hat T$ almost everywhere.  Uniqueness of the optimal measure $\gamma =(ID,T)|_{\#}\mu_1$ follows immediately.
\end{proof}
The following result now follows easily from Theorem~\ref{T:main} and Proposition~\ref{cmonsupport}.
\newtheorem{maincor}[main]{Corollary}
\begin{maincor}
Assume $c$ satisfies the twist in $c$-monotonicity condition and $\mu_1$ is absolutely continuous with respect to local coordinates.  Then the solution $\gamma$ in \eqref{MK} induces a Monge solution and is unique.
\end{maincor}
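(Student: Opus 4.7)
The plan is to reduce the corollary directly to Theorem~\ref{T:main} by verifying that the twist in $c$-cyclical monotonicity hypothesis implies the twist on splitting sets hypothesis. The only ingredient needed is the second assertion of Proposition~\ref{cmonsupport}, namely that every $c$-splitting set is $c$-cyclically monotone.

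More precisely, I would fix an arbitrary $x_1 \in M_1$ and an arbitrary $c$-splitting set $S \subseteq \{x_1\} \times M_2 \times \cdots \times M_m$. By Proposition~\ref{cmonsupport}, $S$ is also $c$-cyclically monotone. The twist in $c$-cyclical monotonicity hypothesis then immediately implies that the map
\begin{equation*}
(x_2,\ldots,x_m) \mapsto D_{x_1} c(x_1,x_2,\ldots,x_m)
\end{equation*}
is injective on the subset of $S$ where $c$ is differentiable with respect to $x_1$. Since $x_1$ and $S$ were arbitrary, this is exactly the statement in Definition~\ref{D:twist} that $c$ is twisted on splitting sets.

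With this verification in hand, the hypotheses of Theorem~\ref{T:main} are satisfied ($\mu_1$ is absolutely continuous by assumption), so the Monge solution and uniqueness conclusions follow directly from that theorem. Because the entire argument amounts to a one-line implication between the two twist conditions, I do not anticipate any genuine obstacle; the ``main'' step is simply noting that Proposition~\ref{cmonsupport} provides exactly the containment of splitting sets inside the class of $c$-cyclically monotone sets needed to transfer the injectivity hypothesis.
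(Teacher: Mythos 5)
Your proposal is exactly the paper's argument: the paper remarks that twist in $c$-cyclical monotonicity implies twist on splitting sets via Proposition~\ref{cmonsupport}, and then states that the corollary follows from Theorem~\ref{T:main} and that proposition. Your write-up just spells out this one-line reduction in slightly more detail, which is fine.
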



\section{Differential conditions}
We now exhibit several example classes of cost functions that satisfy the twist on splitting sets condition.
First, in this section, we show that the differential conditions in \cite{P1} imply twist on splitting sets.  Let us  recall those conditions:

Let $M_i \subseteq \R^n$, $i=1, ..., m$. We will assume throughout this section that $c$ is $(1,m)$-twisted; that is, the map

\begin{equation*}
x_m \mapsto D_{x_1}c(x_1,x_2,...x_m)
\end{equation*}
is injective for fixed $x_1,...x_{m-1}$. We will also assume that $c$ is $(1,m)$-non-degenerate; that is, the matrix 

\begin{equation*}
D^2_{x_1x_m}c =(\frac{\partial^2 c}{\partial x^i_1 \partial x^j_m})_{ij}
\end{equation*}
is everywhere non-degenerate.

The most restrictive condition in \cite{P1} is based on the following tensor.
\newtheorem{tensor}{Definition}[section]

\begin{tensor}
Suppose $c$ is $(1,m)$-non-degenerate.  Let $\vec{y}=(y_1,y_2,...,y_m) \in M_1 \times M_2 \times...\times M_m$.  For each $i:=2,3,...,m-1$ choose a point $\vec{y}(i)=(y_1(i),y_2(i),...,y_m(i)) \in \overline{M_1} \times \overline{M_2} \times...\times \overline{M_m}$ such that $y_i(i)=y_i$.  Define the following bi-linear maps on $T_{y_2}M_2 \times T_{y_3}M_3 \times ...\times T_{y_{m-1}}M_{m-1}$:

\begin{equation*}
 S_{\vec{y}}=-\sum_{j=2}^{m-1} \sum_{\substack {i=2 \\ i \neq j}}^{m-1}D^2_{x_ix_j}c(\vec{y}) +\sum_{i,j=2}^{m-1}(D^2_{x_ix_m} c \, (D^{2}_{x_1x_m}c)^{-1}\, D^2_{x_1x_j}c)(\vec{y})
\end{equation*}

\begin{equation*}
 H_{\vec{y},\vec{y}(2),\vec{y}(3),...,\vec{y}(m-1)}=\sum_{i=2}^{m-1}({\rm Hess}_{x_i} c(\vec{y} (i))-{\rm Hess}_{x_i}c(\vec{y}))
\end{equation*}

\begin{eqnarray*}
T_{\vec{y},\vec{y}(2),\vec{y}(3),...,\vec{y}(m-1)}=S_{\vec{y}}+H_{\vec{y},\vec{y}(2),\vec{y}(3),...,\vec{y}(m-1)}
\end{eqnarray*}
\end{tensor}
The main condition required for Monge solutions in \cite{P1} is negative definiteness of the tensor $T$, for all choices of the $\vec{y},\vec{y}(2),\vec{y}(3),...,\vec{y}(m-1)$.

A geometric condition on the domains, defined in terms of the following set, is also required.

\newtheorem{domains}[tensor]{Definition}
\begin{domains}\label{domains}
 Let $x_1 \in M_1$ and $p_1 \in T^*_{x_1}M_1$.  We define $Y^c_{x_1,p_1} \subseteq M_2 \times M_3 \times ...\times M_{m-1}$ by
\begin{equation*}
 Y^c_{x_1,p_1} =\{(x_2,x_3,...,x_{m-1}) |\text{ } \exists\text{ } x_m \in M_m \textit{ s.t. }  D_{x_1}c(x_1,x_2,...,x_m) = p_1\}
\end{equation*}

\end{domains}

These conditions are discussed in more detail in \cite{P1}.  
Although they are fairly restrictive, several examples of cost functions satisfying these conditions are exhibited in \cite{P1}, including the Gangbo-Swiech \cite{GS} cost, $\sum_{ i\neq j}|x_i-x_j|^2$ on $\mathbb{R}^n$, and perturbations thereof, the cost function considered by Heinich \cite{H}, $h(\sum_{i=1}^m x_i)$ for a strictly concave $h:\mathbb{R}^n \rightarrow \mathbb{R}$, and three marginal functions of the form $x_1\cdot x_2 +x_2\cdot x_3 +g(x_1,x_3)$, whenever $D^2_{x_1x_3}g >0$.

Below, we prove that these conditions imply the twist on splitting sets condition.

\newtheorem{diffcond}[tensor]{Proposition}
\begin{diffcond}
(Sufficient differential conditions) 

Suppose that:
\begin{enumerate}
\item $c$ is $(1,m)$-non-degenerate. 
\item $c$ is $(1,m)$-twisted.
\item For all choices of $\vec{y}=(y_1,y_2,...,y_m) \in M_1 \times M_2 \times...\times M_m$ and of $\vec{y}(i)=(y_1(i),y_2(i),...,y_m(i)) \in \overline{M_1}\times \overline{M_2} \times...\times \overline{M_m}$ such that $y_i(i)=y_i$ for $i=2,...,m-1$, we have 

\begin{equation*}
T_{\vec{y},\vec{y}(2),\vec{y}(3),...,\vec{y}(m-1)} < 0. 
\end{equation*}
\item For all $x_1 \in M_1$ and $p_1 \in T^*_{x_1}M_1$, $Y_{x_1,p_1}^c$ is geodesically convex.
\end{enumerate}
Then $c$ is twisted on splitting sets.
\end{diffcond}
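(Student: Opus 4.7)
\emph{Proof proposal.} The plan is to extend the second-variation strategy of \cite{P1} to the splitting-set setting, arguing by contradiction. Suppose for contradiction that $\vec y = (x_1, x_2, \ldots, x_m)$ and $\bar{\vec y} = (x_1, \bar x_2, \ldots, \bar x_m)$ are two distinct points of a $c$-splitting set $S$ at which $c$ is differentiable in $x_1$ and $D_{x_1}c(\vec y) = D_{x_1}c(\bar{\vec y}) =: p_1$. By Proposition~\ref{cmonsupport} and the standard $c$-conjugation procedure, the splitting potentials $u_1,\ldots,u_m$ may be taken $c$-conjugate. Hypotheses~(1) and~(2) then let the implicit function theorem locally resolve the constraint $D_{x_1}c(x_1,z_2,\ldots,z_m) = p_1$ as a smooth map $z_m = z_m(z_2,\ldots,z_{m-1})$, parameterizing $Y^c_{x_1,p_1}$ by $(z_2,\ldots,z_{m-1})$.

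On this parameterization set
\begin{equation*}
\psi(z_2,\ldots,z_{m-1}) := c\bigl(x_1, z_2, \ldots, z_{m-1}, z_m(z_2,\ldots,z_{m-1})\bigr) - \sum_{i=2}^{m-1} u_i(z_i) - u_m\bigl(z_m(z_2,\ldots,z_{m-1})\bigr).
\end{equation*}
The splitting inequality gives $\psi \geq u_1(x_1)$ on $Y^c_{x_1,p_1}$, with equality at the tuples $(x_2,\ldots,x_{m-1})$ and $(\bar x_2,\ldots,\bar x_{m-1})$. By hypothesis~(4), I connect these two argmin points by a geodesic $t\mapsto (y_2(t),\ldots,y_{m-1}(t))$ inside $Y^c_{x_1,p_1}$; along the geodesic $\psi\geq u_1(x_1)$, with equality at both endpoints.

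The heart of the plan is a second-variation calculation for $\psi$ at an endpoint along this geodesic. Differentiating the $c$-term with $z_m$ implicit, via $\partial_j z_m = -(D^2_{x_1 x_m}c)^{-1} D^2_{x_1 x_j}c$, a Schur-complement rearrangement produces precisely the tensor $S_{\vec y}$ of the definition. Since the $u_i$ are only semi-concave, their contribution is handled via the $c$-conjugate envelope: at each time $t$, $u_i(y_i(t))$ is attained (or approximated) by some auxiliary $\vec y(i)\in\overline{M_1}\times\cdots\times\overline{M_m}$ whose $i$-th entry equals $y_i(t)$, and semi-concavity bounds the second variation of $u_i$ along $y_i(t)$ from above by $\mathrm{Hess}_{x_i}c(\vec y(i))$. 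The discrepancy with $\mathrm{Hess}_{x_i}c(\vec y)$ contributes exactly $H_{\vec y,\vec y(2),\ldots,\vec y(m-1)}$, so the total second variation of $\psi$ along the geodesic is bounded above by $T_{\vec y,\vec y(2),\ldots,\vec y(m-1)}$, strictly negative by hypothesis~(3). On the other hand, since $\vec y$ is a global minimum of $\psi$ on $Y^c_{x_1,p_1}$, first-order criticality makes the tangential derivative of $\psi$ along the geodesic vanish at $\vec y$, forcing the tangential second derivative to be nonnegative --- the desired contradiction.

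The principal obstacle, as flagged in the previous paragraph, is the absence of a classical second derivative for the semi-concave $u_i$. This is resolved by the envelope/semi-concavity trick that replaces $-D^2 u_i$ with $-\mathrm{Hess}_{x_i}c$ evaluated at auxiliary points $\vec y(i)$, and it is precisely the reason the tensor $T$ depends on the full tuple $(\vec y(2),\ldots,\vec y(m-1))$ and that condition~(3) must hold uniformly across all such choices, with the $\vec y(i)$ allowed to lie in the closures $\overline{M_j}$ so as to cover limits of near-minimizing sequences.
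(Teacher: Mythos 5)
Your plan shares the paper's key geometric idea: fix $x_1$ and $p_1$, parameterize $Y^c_{x_1,p_1}$ via the implicit function theorem, connect the two candidate points by a geodesic using hypothesis~(4), and invoke the \cite{P1} tensor conditions along this geodesic.  But the second-variation framing has two related defects that make the argument, as written, not go through.

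First, the envelope bound points the wrong way.  Semi-concavity of $u_i$, via the $c$-conjugate envelope touching $u_i$ from above at $y_i$, gives an \emph{upper} bound $D^2 u_i(y_i) \leq \mathrm{Hess}_{x_i}c(\vec y(i))$.  Since $u_i$ enters $\psi$ with a minus sign, this yields a \emph{lower} bound on the $u_i$ contribution to the second variation of $\psi$, not the upper bound you need.  So you cannot conclude that the second variation of $\psi$ at $\vec y$ is $\leq T_{\vec y,\vec y(2),\ldots,\vec y(m-1)} < 0$, and the step "second variation must be nonnegative at a minimum, contradiction" does not follow.

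Second, and more structurally, your contradiction uses only the endpoint $\vec y$ — you argue that $\vec y$ is a minimum of $\psi$ and that its second variation there is negative.  But $\vec y$ is a minimum of $\psi$ simply because $(x_1,x_2,\ldots,x_m)\in S$, regardless of the existence of a second point $\bar{\vec y}\in S$.  If your pointwise second-variation bound were valid, it would forbid \emph{any} splitting-set point with $D_{x_1}c$ existing, which is plainly false.  This is a sign that the argument cannot be local to one endpoint.

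The paper's proof avoids both problems by working at first order along the entire geodesic.  It introduces
\begin{equation*}
f(t):=\sum_{i=2}^{m-1}\big[V_i(t)-D_{x_i}c(x_1,\gamma_2(t),\ldots,\gamma_{m-1}(t),x_m(t))\big]\Big\langle\tfrac{d\gamma_i}{dt}\Big\rangle,
\end{equation*}
where $V_i(t)\in\partial u_i(\gamma_i(t))$ is any measurable selection of supergradients, and observes that one may take $f(0)=0$ because $(x_2,\ldots,x_m)\in S$, and $f(1)=0$ \emph{if} $(\bar x_2,\ldots,\bar x_m)\in S$.  The calculation in the proof of Theorem~3.1 of \cite{P1}, under hypotheses~(1)--(4), gives the strict monotonicity $f(1)<f(0)$ for every such selection $V_i(t)$; this is an integrated inequality over the whole geodesic, not a pointwise second-derivative bound, which is precisely what lets the argument accommodate the merely semi-concave $u_i$ via supergradient selections rather than classical Hessians.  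The contradiction then requires \emph{both} endpoint conditions $f(0)=f(1)=0$, matching the structural need identified above.  If you want to recast the proof in second-variation language, you would need to make the estimate into an integral comparison between the two endpoints (so that both enter essentially) and flip the role of the semi-concavity bound; as written, the pointwise argument at $\vec y$ does not close.
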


\begin{proof}
Fix $x_1 \in M_1$ and let $S \subseteq \{x_1\} \times M_2 \times ....\times M_m$ be a $c$-splitting set. Take points $(x_2,...,x_m)$ and $(\bar x_2,...,\bar x_m)$ such that $p_1:=D_{x_1}c(x_1,x_2...x_m) = D_{x_1}c(x_1,\bar x_2...\bar x_m)$.  It suffices to show that at most one of these two points can be in $M_1$.  Without loss of generality, assume   $(x_2,...,x_m) \in S$; we must show that $(\bar x_2,...,\bar x_m) \notin S$.
 
For $i=2,3,...,m-1$, choose geodesics joining $\gamma_i(t)$ $x_i=\gamma_i(0)$ and $\bar x_i=\gamma_i(1)$.  The geodesic convexity of $Y_{x_1,p_1}^c$ implies that for each $t$, there exists an $x_m(t) \in M_m$ such that $p_1=D_{x_1}c(x_1,\gamma_2(t)...\gamma_{m-1}(t),x_m(t))$.  Twistedness ensures the uniqueness of $x_m(t)$.  Note that $x_m(0) =x_m$, and $x_m(1) = \bar x_m$.

Now, as $S$ is a splitting set, we have splitting functions $u_i:M_i \rightarrow \mathbb{R}$ such that
\begin{equation*}
\sum_{i=2}^m u_i(x_i) \leq c(x_1,x_2,...,x_m),
\end{equation*}
with equality on $S$.  By a standard convexification trick and compactness of the $M_i$, we can assume that 
\begin{equation*}
u_i(x_i) =\min_{x_j,j\neq i} c(x_1,...x_m) -\sum_{j=2,j\neq i}u_j(x_j).
\end{equation*}
The functions $u_i$ are semi-concave and have superdifferentials everywhere.  Furthermore, for any $(x_2,...,x_m) \in S$, we have

\begin{equation*}
D_{x_i}c(x_1,x_2,....,x_m) \in \partial u_i(x_i)
\end{equation*}
for $i=2,3,...,m$, where $\partial u_i(x_i)$ denotes the super-differential of $u_i$.

Take a measurable selection of covectors $V_i(t) \in \partial u_i(\gamma_i(t))$, and set. 
\begin{equation*}
f(t):=\sum_{i=2}^{m-1}[V_i(t)-D_{x_i}c(x_1,\gamma_2(t),...,\gamma_{m-1}(t),x_m(t))]\langle\frac{d\gamma_i}{dt}\rangle
\end{equation*}
Now, note that we can take $V_i(0) = D_{x_i}c(x_1,x_2,....,x_m)$, as $(x_2,....,x_m) \in S$, in which case $f(0) =0$.  Similarly, \emph{if} $(\bar x_2,...,\bar x_m) \in S$, we can choose $V_i(1) =D_{x_i}c(x_1,x_2,....,x_m)$, in which case $f(1) =0$.

However, the calculation in \cite{P1} (in the proof of Theorem 3.1) it is shown that under the conditions $1,2,3$ and $4$, $f(1) <f(0)$, for any selection of covectors $V_i(t) \in \partial u_i(\gamma_i(t))$.  This implies that we cannot have $(\bar x_2,...,\bar x_m) \in S$, completing the proof.
\end{proof} 

\section{Infimal convolution examples}\label{S:infimal} 
In this section we consider a sort of infimal convolution of several cost functions; that is, cost functions defined by
 \begin{equation}\label{infcon}
c(x_1, ..., x_m)= c(X_1,X_2,...,X_k) = \min_{y \in Y} \sum_{j=1}^k c_j(X_j, y).
\end{equation}
Here, for notational convenience, we decomposed the $m$-tuple $(x_1, ..., x_m)$ into {$k$ smaller tuples} $(X_1, ..., X_k)=(x_1, ..., x_m)$ with $X_j = (x_{m_{j-1} + 1}, \cdots, x_{m_j})$, with $0=m_0< m_1 < m_2 < \cdots < m_k=m$; in particular, $X_1 = (x_1, ..., x_{m_1})$. 
We also assume that $Y$ is a smooth manifold without boundary, and we are implicitly assuming the existence of a minimizing $y$ for all $(x_1,x_2,...,x_m)$
{(which holds, for example, whenever $Y$ is compact)}. We also assume that the functions $c_j$  are  semi-concave, so that $c$ is also semi-concave.

As a special case, {when each $X_j$ is a singleton, }we have

\begin{equation}\label{match}
c(x_1,...,x_m) := \min_{y \in Y} \sum_{i=1}^m c_i(x_i,y).
\end{equation}
These cost functions, called {\em matching costs},  have important applications in matching problems in economics \cite{CE}\cite{CMN}.  In \cite{P9}, one of the present authors proved a Monge solution and uniqueness result for costs of this form.  The argument was completely different than the one here, and required additional conditions on the $c_i$, including nondegeneracy of various matrices of mixed second order partials and uniqueness of the minimizing $y$.  In a recent preprint, we studied the special case when each $c_i$ is the distance squared on a Riemannian manifold \cite{KP}; in this case, the smoothness and non-degeneracy conditions required in \cite{P9} may fail, and the techniques developed there are closer to those used in this paper. 

Our main result in this direction is the following, which gives a systematic way to generate new multi-marginal cost functions which ensure the Monge solution  structure and uniqueness of the optimal measure.

\newtheorem{infs}{Theorem}[section]
\begin{infs}\label{infs}

 Assume $c_1$ satisfies twist in $c_1$-cyclical monotonicity
 and $c_j$, $j=2, ..., k$, satisfies twist in $c_j$-cyclical monotonicity with respect to $y$, i.e., the map $X_j \in S \mapsto D_{y} c_j ( X_j, y)$ is injective along $c$-monotone subsets $S \subseteq M_{m_{j-1}+1}  \times \cdots M_{m_j}\times \{y\}$.
 Then the cost $c(x_1,x_2,...,x_m)$ defined by \eqref{infcon} satisfies  twist in $c$-cyclical monotonicity. 

\end{infs}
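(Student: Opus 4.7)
The plan is to reduce the twist condition on $c$ to the analogous twist conditions on the individual $c_j$'s via a lifting construction. Given a $c$-cyclically monotone set $S \subseteq \{x_1\} \times M_2 \times \cdots \times M_m$, for each $(x_1, x_2^i, \ldots, x_m^i) \in S$ choose a minimizer $y^i$ of $y \mapsto \sum_j c_j(X_j^i, y)$ and form the lifted set $\tilde S = \{(X_1^i, \ldots, X_k^i, y^i)\}$. Since $c \leq \sum_j c_j(X_j, y)$ with equality at $y = y^i$, a standard envelope argument yields $D_{x_1} c(x_1, x_2^i, \ldots, x_m^i) = D_{x_1} c_1(X_1^i, y^i)$ wherever $c$ is differentiable in $x_1$.

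The first step is to show that, for each $j$, the projection $\{(X_j^i, y^i) : i\}$ is $c_j$-cyclically monotone. I would prove this by applying $c$-monotonicity of $S$ with permutations $\sigma_\ell$ chosen as follows: inside block $j$, take the permutations to be tested for $c_j$; for all indices $\ell \geq 2$ outside block $j$, use a single common permutation $\tau_Y$ that will reshuffle the $y^i$'s. Upper-bounding each rearranged term by $\sum_j c_j(\cdot, y^{\tau_Y(i)})$ and using permutation invariance of sums, the contributions from blocks moved by $\tau_Y$ in lockstep cancel, leaving exactly the desired $c_j$-monotonicity inequality.

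Now take two points $(x_2^1, \ldots, x_m^1), (x_2^2, \ldots, x_m^2) \in S$ with the same value of $D_{x_1} c$; the envelope identity transports this to $D_{x_1} c_1(X_1^1, y^1) = D_{x_1} c_1(X_1^2, y^2)$. Since $\{(X_1^1, y^1), (X_1^2, y^2)\}$ is a $c_1$-monotone subset of $\{x_1\} \times M_2 \times \cdots \times M_{m_1} \times Y$, the twist in $c_1$-cyclical monotonicity forces $(x_2^1, \ldots, x_{m_1}^1, y^1) = (x_2^2, \ldots, x_{m_1}^2, y^2)$. In particular $y^1 = y^2 =: y^*$ and $X_1^1 = X_1^2$.

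For each remaining block $j_0 \geq 2$, apply two-point $c$-monotonicity of $S$ with the permutation that swaps only the block-$j_0$ coordinates, producing mixed configurations $\mathrm{mix}_1$ and $\mathrm{mix}_2$ and the inequality $c(\mathrm{pt}_1) + c(\mathrm{pt}_2) \leq c(\mathrm{mix}_1) + c(\mathrm{mix}_2)$. Since $X_1^1 = X_1^2$ and $y^1 = y^2 = y^*$, a term-by-term comparison shows $\sum_i \sum_\ell c_\ell(X_\ell^{\mathrm{mix}_i}, y^*) = \sum_i \sum_\ell c_\ell(X_\ell^{\mathrm{pt}_i}, y^*) = c(\mathrm{pt}_1) + c(\mathrm{pt}_2)$, so the entire chain collapses to equalities and $y^*$ must also be a minimizer for each mixed configuration. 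Subtracting the first-order condition $\sum_\ell D_y c_\ell = 0$ at $y^*$ for $\mathrm{pt}_1$ from that for $\mathrm{mix}_1$ cancels every block except $j_0$, giving $D_y c_{j_0}(X_{j_0}^1, y^*) = D_y c_{j_0}(X_{j_0}^2, y^*)$. Twist in $c_{j_0}$-monotonicity with respect to $y$, applied to the $c_{j_0}$-monotone subset sitting inside $\cdots \times \{y^*\}$ from the first step, then delivers $X_{j_0}^1 = X_{j_0}^2$. The main obstacle is this last ``squeeze''—extracting term-by-term equality from the collapse of a single chain of inequalities—together with the subdifferential care needed to justify ``$\sum_\ell D_y c_\ell = 0$'' when the $c_j$ are only semi-concave in $y$.
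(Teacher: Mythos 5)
Your proposal is correct and follows essentially the same route as the paper's proof: you reproduce Lemmas 5.2/5.3 (projecting a $c$-monotone set, together with argmin $y$'s, to $c_j$-monotone sets by choosing the permutations on the off-blocks to run in lockstep with the $y$-permutation), Lemma 5.4 (the envelope identity $D_{x_1}c = D_{x_1}c_1$ plus $c_1$-twist forces a common $y$ and common $X_1$), and the main ``swap one block, collapse the inequality chain, subtract first-order conditions in $y$'' argument to transfer the twist of each $c_{j_0}$ to the remaining blocks. The paper handles the semi-concavity/subdifferential point you flag by observing that a semi-concave function is differentiable at any interior minimum (its superdifferential there must reduce to $\{0\}$); with that observation your sketch is complete and matches the paper step for step.
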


\newtheorem{rem}[infs]{Remark}

\begin{rem}
In this theorem, in fact, a slightly stronger result holds, namely, one can replace twist in  $c$-cyclical monotonicity in the 
conclusion, with twist in $c$-monotonicity of order two, which is defined exactly 
 as in Definition \ref{cmonodef}, except the number $N$ there is fixed to be $N=2$.  This will be obvious by examining the proof.
\end{rem}

 The proof of this result is based on the same essential idea as our argument in \cite{KP} and is divided into several Lemmas.  The first two of  these show that a $c$-cyclically monotone set $S$ projects,  in a certain sense, to $c_j$-cyclically monotone sets. 

\newtheorem{c_1mono}[infs]{Lemma}
\begin{c_1mono}\label{c_1mono}
Suppose the set $S \subseteq M_1 \times ....\times M_m$ is $c$-cyclically-monotone.  Use the notation given in the beginning of this section.  Then the set
\begin{eqnarray*}
\bar S &:=& \{(X_1,y): \exists (X_2, ...., X_k) \text{ such that } (X_1,..., X_k) \in S\text{ and } \\
&& y \in {\rm argmin}  \big[\sum_{j=1}^k c_j (X_j, y) \big]\}\\
&& \subseteq M_1 \times...\times M_{m_1} \times Y.
\end{eqnarray*}
is $c_1$-cyclically-monotone. 
\end{c_1mono}

\begin{proof}
The proof is straightforward 
and we include it here for the reader's convenience. 
Given $$(X^i_1 , y^i)= (x^i_1, x_2^i, ...x^i_{m_1},y^i) \in \bar S,$$ for $i=1,2...,l$, permutations $\sigma_j$ on $l$ letters for $j=2,3,...,m_1$ and a permutation $\eta$ on $l$ letters (corresponding to the $y$ argument in $c_1$),  we need to show

\begin{equation*}
\sum_{i=1}^l c_1(x^i_1, x^i_2,...x^i_{m_1},y^i) \leq \sum_{i=1}^l c_1(x^i_1, x^{\sigma_2(i)}_2,...x^{\sigma_{m_1}(i)}_{m_1},y^{\eta(i)}).
\end{equation*} 
Now, for each $i$ we can choose $(X^i_2, ....X^i_k)$ so that  $(X_1^i, X^i_2,...,X^i_k) \in S$ and 
\begin{equation*} 
y^i \in  {\rm argmin}  \big[\sum_{j=1}^k c_j (X^i_j, y^i) \big].
\end{equation*}
Now, for $j=m_1+1,..., m$, choose $\sigma_j=\eta$. Then, from $c$-cyclical monotonicity, we have
\begin{eqnarray*}
& & \sum_{i=1}^l c_1(x^i_1, x_2^i,...x^i_{m_1},y^i)+c_2(x^i_{m_1+1},...x^i_{m_2},y^i) + \cdots + c_k(x^i_{m_{k-1}+1},.., x^i_{m},y^i)\\
& =& \sum_{i=1}^l c(x^i_1, x^i_2,...x^i_m)\\
&\leq & \sum_{i=1}^l c(x^i_1, x^{\sigma_2(i)}_2,...x^{\sigma_{m_1}(i)}_{m_1},x^{\eta(i)}_{m_1+1},...x^{\eta(i)}_m)\\
&\leq & 
\sum_{i=1}^l c_1(x^i_1, x_2^{\sigma_2(i)},...x^{\sigma(i)}_{m_1},
{ y^{\eta(i)}})+c_2(x^{\eta(i)}_{m_1+1},...x^{\eta(i)}_{m_2},y^{\eta(i)}) \\
&  &  \ \ \ \ \ \ \ \  + \cdots + c_k(x^{\eta(i)}_{m_{k-1}+1},.., x^{\eta(i)}_{m},y^{\eta(i)}).
\end{eqnarray*}
Noting that 
\begin{align*}
& \sum_{i=1}^l c_2(x^i_{m_1+1},...x^i_{m_2},y^i) + \cdots + c_k(x^i_{m_{k-1}+1},.., x^i_{m},y^i) \\ &=  \sum_{i=1}^l 
c_2(x^{\eta(i)}_{m_1+1},...x^{\eta(i)}_{m_2},y^{\eta(i)}) 
+ \cdots + c_k(x^{\eta(i)}_{m_{k-1}+1},.., x^{\eta(i)}_{m},y^{\eta(i)}),
\end{align*}
we have
\begin{equation*}
\sum_{i=1}^l c_1(x_1, x^i_2,...x^i_k,y^i) \leq \sum_{i=1}^l c_1(x_1, x^{\sigma_2(i)}_2,...x^{\sigma_k(i)}_k,y^{\eta(i)}),
\end{equation*}
which completes the proof.

\end{proof}

 Similarly, we have 
\newtheorem{c_2mono}[infs]{Lemma}
\begin{c_1mono}\label{c_2mono}
Suppose the set $S \subseteq M_1 \times ....\times M_m$ is $c$-cyclically-monotone.  
Fix $j$. 
Then the set
\begin{eqnarray*}
\bar S &:=& \{(X_j,y): \exists X_i \text{ for $i\ne j$}  \text{ such that } (X_1,..., X_k) \in S\text{ and } \\
& &y \in {\rm argmin}  \big[\sum_{i=1}^k c_i (X_i, y) \big]\}\\
& &\subseteq M_{m_{j-1}+1}  \times...\times M_{m_j} \times Y.
\end{eqnarray*}
is $c_j$-cyclically-monotone.
\end{c_1mono}

%
%
\begin{proof}
The proof is very similar to the proof of the preceding lemma and is skipped. 
%
%
\end{proof}

\newtheorem{unique_y}[infs]{Lemma}
\begin{unique_y}\label{unique_y}
Fix $x_1 \in M_1$ and suppose the set $S \subseteq \{x_1\}\times M_2 \times ....\times M_m$ is $c$-cyclically-monotone and $c_1$ satisfies the twist in cyclical monotonicity condition.  Choose $(x_2,...x_m)$ and $(\bar x_2,...,\bar x_m)$ in $S$.  Let 
\begin{equation*}
y \in  {\rm argmin} \sum_{j=1}^k c_j (X_j, y)
\end{equation*}
and 
 \begin{equation*}
 \bar y \in  {\rm argmin} \sum_{j=1}^k c_j (\bar X_j, y),
\end{equation*}  
where $(X_1, ..., X_k) = (x_1, x_2, ..., x_m)$ and $(\bar X_1, ..., \bar X_k) = (x_1, \bar x_2, ..., \bar x_m)$. 
If 
\begin{equation*}
D_{x_1}c(x_1, x_2,...,x_m) = D_{x_1}c(x_1, \bar x_2,...,\bar x_m),
\end{equation*}
then we must have $y =\bar y $, and $x_i = \bar x_i$, for $i=2,3,...,m_1$.

\end{unique_y}
\begin{proof}
Note that existence of the derivative $D_{x_1}c_1(x_1,x_2,....x_{m_1}, y)$ 
(respectively, $D_{x_1}c_1(x_1,\bar x_2,....\bar x_{m_1}, \bar y)$) implies the existence of $D_{x_1}c(x_1,x_2,....x_m)$ (respectively $D_{x_1}c(x_1,\bar x_2,...,\bar x_m)$) by standard arguments, and we have 
\begin{align*}
&D_{x_1}c_1(x_1,x_2,....x_{m_1}, y) = D_{x_1}c(x_1,x_2,....x_m) \\
&= D_{x_1}c(x_1,\bar x_2,...,\bar x_m) =  D_{x_1}c_1(x_1,\bar x_2,....\bar x_{m_1}, \bar y).
\end{align*}
The result now follows, since $c_1$ is twisted in  $c_1$-cyclical monotonicity and the projection of $S$ to $M_1 \times \cdots \times M_{m_1}$ is $c_1$-cyclically monotone from Lemma \ref{c_1mono}. 
\end{proof}


\begin{proof}[\bf Proof of Theorem \ref{infs}]
 Fix $x_1 =\bar x_1 \in M_1$. 
Let $S$ be 
$c$-cyclically-monotone in $ \{ x_1\} \times M_2 \times ....\times M_m$. 
Let $(x_1,...,x_m)$, $(\bar x_1,...,\bar x_m) \in S$ such that 
\begin{equation*}
D_{x_1}c(x_1,x_2...,x_m) = D_{x_1}c(\bar x_1,\bar x_2...,\bar x_m),
\end{equation*} 
we need to show $x_i =\bar x_i$ for all $i=2,3...,m$. (Here, the existence of these derivatives is part of the assumption.)
For $i=2,..., m_1$, this follows immediately from Lemma \ref{unique_y}.

To take care of the other $i$, let us
 use the notation 
\begin{align*}
&X_j = ( x_{m_{j-1}+1}, ...,  x_{m_j}); \\
& \bar X_j = (\bar x_{m_{j-1}+1}, ..., \bar x_{m_j}).
\end{align*}  
From the same Lemma \ref{unique_y}, we  obtain the existence of a $y$ such that  
\begin{equation}\label{eq:common y}
y \in  \Big[{\rm argmin} \sum_{j=1}^k c_j (X_j, y)\Big] \bigcap \Big[{\rm argmin} \sum_{j=1}^k c_j (\bar X_j, y)\Big].
\end{equation}
%
We then obtain, by minimality of $\sum_{j=1}^k c_j (X_j, y) $ and $\sum_{j=1}^k c_j (\bar X_j, y)$
at $y$,
\begin{align}\label{eq:critical y}
D_y  \sum_{j=1}^k c_j (X_j, y) =0,  \quad D_y  \sum_{j=1}^k c_j (\bar X_j, y) =0
\end{align}
Here, the differentiability of these derivatives follows from the semi-concavity of $c_j$'s together with the minimality at $y$: a semi-concave function $f$ should be differentiable at a minimum point. This last fact can be seen  easily by considering the superdifferential of the function $f$ (i.e. the subdifferential of $-f$), because, if the superdifferential $\partial f$ at a point $x_0$ has an element other than $0$, then $x_0$ cannot be a minimum point by the definition of superdifferential.  

 Now, for fixed $l$ with $1\le l\le k $ let $X_l'=\bar X_l$ and $X_j' = X_j$ for $j\neq l$.  Similarly, set $\bar X_l'= X_l$ and $\bar X_j' = \bar X_j$ for $j\neq l$.  Note that by the definition of $c$, 

\begin{eqnarray*}
c(X_1,X_2'...,X_k') + c(X_1,\bar X_2'...,\bar X_k') &\leq& \sum_{j=1}^k c_j(X_j',y) +\sum_{j=1}^k c_j(\bar X_j',y) \\
&=&\sum_{j=1}^m c_j(X_j,y) +\sum_{j=1}^m c_j(\bar X_j,y)\\
&=&c(X_1,X_2...,X_k) + c(X_1,\bar X_2...,\bar X_k).
\end{eqnarray*}
On the other hand, by $c$-monotonicity, we must have 
\begin{equation*}
c(X_1,X_2...,X_k) + c(X_1,\bar X_2...,\bar X_k) \leq c(X_1,X_2'...,X_k') + c(X_1,\bar X_2'...,\bar X_k').
\end{equation*}
In light of the preceding series of inequalities, this implies that 

\begin{equation*}
c(X_1,X_2'...,X_k') + c(X_1,\bar X_2'...,\bar X_k') = \sum_{j=1}^k c_j(X_j',y) +\sum_{j=1}^k c_j(\bar X_j',y),
\end{equation*}
and $y \in {\rm argmin}\sum_{j=1}^k c_j(X_j',y)$, so that $\sum_{j=1}^m D_{y}c_j(X_j',y)=0$, or 
\begin{equation*}
\sum_{j\neq l}^k D_{y}c_j(X_j,y)=-D_{y}c_l(\bar X_l,y).
\end{equation*}
(Here again, the differentiability of  these functions follows from the semi-concavity of $c_j$'s together with the minimality at $y$. )
Now, from \eqref{eq:critical y},
\begin{equation*}
\sum_{j\neq l}^k D_{y}c_j(X_j,y)=-D_{y}c_l( X_l,y).
\end{equation*}
We therefore conclude that 
\begin{equation}\label{eq:conclusion}
D_{y}c_l(  X_l,y) = D_{y}c_l( \bar X_l,y).
\end{equation}
 
By Lemma \ref{c_2mono} and the twist in $c_l$-monotonicity with respect to $y$, we obtain $X_l =\bar X_l$. Since $l$ is arbitrary, this shows $x_i = \bar x_i$ for $1\le i \le m$, completing  the proof.

%
\end{proof}

\bibliographystyle{plain}
\bibliography{biblio}
\end{document}